\newcommand{\mathsym}[1]{{}}
\newcommand{\unicode}[1]{{}}
\newtheoremstyle{my theoremstyle}
{1.0em}                    
    {1.0em}                    
    {\itshape}                   
    {}                           
    {\scshape}                   
    {.}                          
    {.5em}                       
    {}  
\newtheoremstyle{dfn}
{1.0em}                    
    {1.0em}                    
    {}                   
    {}                           
    {\scshape}                   
    {.}                          
    {.5em}                       
    {}  
\theoremstyle{my theoremstyle}
   \newtheorem{thm}{Theorem}[section]
   \newtheorem{prop}[thm]{Proposition}
   \newtheorem{cor}[thm]{Corollary}
   \newtheorem{conj}[thm]{Conjecture}
\theoremstyle{dfn}
\theoremstyle{remark}   
   \newtheorem{rmk}[thm]{{\scshape Remark}}
\newcommand{\Z}{\mathbb{Z}}
\newcommand{\Q}{\mathbb{Q}}
\newcommand{\C}{\mathbb{C}}
\newcommand{\F}{\mathbb{F}}
\newcommand{\Fp}{\overline{\mathbb{F}}_p}
\newcommand{\al}{\alpha}
\newcommand{\e}{\varepsilon}
\numberwithin{equation}{section}
\newcommand{\ua}{\underline{a}}
\newcommand{\cua}{\check{\underline{a}}}
\newcommand{\spec}{\operatorname{Spec}}
\newcommand{\df}{\mathscr{F}^{\rm Dw}}
\newcommand{\hB}{\widehat{B}}
\newcommand{\vp}{\varphi}
\newcommand{\wU}{\widehat{U}}
\newcommand{\wV}{\widehat{V}}
\date{\today}
\begin{document}
\title[Transformation formula of Dwork's $p$-adic hypergeometric function]{Transformation formula of Dwork's $p$-adic hypergeometric function}
\author{Yusuke Nemoto}
\date{\today}
\address{Graduate School of Science and Engineering, Chiba University, 
Yayoicho 1-33, Inage, Chiba, 263-8522 Japan.}
\email{y-nemoto@waseda.jp}
\keywords{Dwork's $p$-adic hypergeometric function; Transformation formula.}
\subjclass[2020]{33C20, 33E50}

\maketitle

\begin{abstract}
In this paper, we give a transformation formula of Dwork's $p$-adic hypergeometric function between $t$ and $t^{-1}$. 
As an appendix, we introduce a finite analogue of this transformation formula, which implies the special case of the above transformation formula.  
\end{abstract}

\section{Introduction}
Let $p$ be a prime and $d \geq 0$ be an integer. 
Let $\ua=(a_0, \ldots, a_d) \in \Z_p^{d+1}$ (resp. $(b_1, \ldots, b_d) \in (\Z_p \setminus \Z_{ \leq 0})^d$) be a $(d+1)$-tuple (resp. $d$-tuple). 
We define the hypergeometric series by 
\begin{align*}
{_{d+1}F_{d}}\left( 
\begin{matrix}
a_0, \cdots, a_d \\
b_1, \ldots, b_d 
\end{matrix}
; t
\right)
=\sum_{n=0}^{\infty} \dfrac{(a_0)_n \cdots (a_d)_n}{(1)_n(b_1)_n \cdots (b_d)_n} t^n \in \Q_p[[t]], 
\end{align*}
where $(a)_n=a(a+1)\cdots (a+n-1)$ denotes the Pochhammer symbol. 
In this paper, we focus on a special case 
\begin{align}
F_{\ua}(t):={_{d+1}F_{d}}\left( 
\begin{matrix}
a_0, \cdots, a_d \\
1, \ldots, 1 
\end{matrix}
; t
\right), \label{HG}
\end{align}
which belongs to $\Z_p[[t]]$. 
Put $D=t\frac{d}{dt}$. 
Then \eqref{HG} becomes a solution of the hypergeometric differential equation 
$$P_{{\rm HG }, \ua} F_{\ua}(t) =0, \quad P_{{\rm HG}, \ua}=D^{d+1} - t(D+a_0) \cdots (D+a_d). $$
For $a \in \Z_p$, we define the Dwork prime $a'$ by $(a+l)/p$, where $l \in \{0, \ldots, p-1\}$ is the unique integer such that $a+l \equiv 0 \pmod{p}$. 
The $i$th Dwork prime is defined by $a^{(i)}=(a^{(i-1)})'$ and $a^{(0)}=a$. 
For $\ua=(a_0, \ldots, a_d) \in \Z_p^{d+1}$, $\ua'$ (resp. $\ua^{(i)}$) denotes $(a'_0, \ldots, a'_d)$ (resp. $(a_0^{(i)}, \ldots, a_d^{(i)})$).    
In the paper \cite{Dwork}, Dwork defines the $p$-adic hypergeometric function by 
$$\df_{\ua}(t)=\dfrac{F_{\ua}(t)}{F_{\ua'}(t^p)} \in \Z_p[[t]], $$
and proves the congruence relation (see Thm. 2 in loc.cit.)
$$\df_{\ua}(t) \equiv \dfrac{[F_{\ua}(t)]_{<p^n}}{[F_{\ua'}(t^p)]_{<p^n}} \pmod {p^n \Z_p[[t]]}, $$
where for a power series $f(t)=\sum_n a_n t^n$, $[f(t)]_{<m}$ denotes the truncated polynomial $\sum_{n<m} a_n t^n$.  
Thanks to this, $\df_{\ua}(t)$ becomes a $p$-adically analytic function in the sense of Krasner, i.e. an element of the Tate algebra as follows. 

For $f(t) \in \Z_p[t]$, $\overline{f(t)} \in \F_p[t]$ denotes the reduction of $f(t)$ modulo $p$. 
Let $N$ be a sufficiently large integer such that 
$$\{[\overline{F_{\ua^{(i)}}(t)}]_{<p}\}_{i \geq 0}=\{[\overline{F_{\ua^{(i)}}(t)}]_{<p}\}_{0 \leq i \leq N}$$ 
as subsets of $\F_p[t]$. 
Define a polynomial by 
\begin{align} 
h_{\ua}(t)=\prod_{i=1}^N [F_{\ua^{(i)}}(t)]_{<p}. \label{ha}
\end{align}
Then $\df_{\ua}(t)$ defines an element of the Tate algebra (cf. \cite[Corollary 2.3]{A3}) and we denote the same notation, i.e. 
$$\mathscr{F}_{\ua}^{\rm Dw}(t) \in \varprojlim_{n} \left(\Z_p/p^n\Z_p[t, h_{\ua}(t)^{-1}] \right). $$

The hypergeometric functions over $\C$ have many transformation and summation formulas (cf. \cite{Slater}). However, little is known about these for Dwork's $p$-adic hypergeometric functions. 
In the paper \cite{Wang, Wang2}, Wang conjectures the transformation formula of Dwork's $p$-adic hypergeometric function between $t$ and $t^{-1}$ as follows. 
Put 
$$\Z_p\langle t, t^{-1}, h_{\ua}(t)^{-1}\rangle=\varprojlim_{n} \left(\Z_p/p^n\Z_p[t, t^{-1}, h_{\ua}(t)^{-1}] \right). $$
By \cite[Proposition 4.11 (3)]{Wang}, we have $\Z_p\langle t, t^{-1}, h_{\ua}(t^{-1})^{-1}\rangle=\Z_p\langle t, t^{-1}, h_{\ua}(t)^{-1}\rangle$, hence there is an involution 
$$\iota \colon \Z_p\langle t, t^{-1}, h_{\ua}(t)^{-1}\rangle \to \Z_p\langle t, t^{-1}, h_{\ua}(t)^{-1}\rangle; \quad f(t) \mapsto f(t^{-1}). $$

\begin{conj}[{\cite[Conjecture 4.18]{Wang}}, {\cite[Conjecture 4.7]{Wang2}}] \label{conj}
For any $a \in \Z_p$, 
let $l \in \{0, \ldots, p-1\}$ be the unique integer such that $a+l \equiv 0 \pmod{p}$. 
If $p$ is an odd prime, then we have 
\begin{align*}
\df_{a, \ldots, a}(t)=((-1)^{d+1}t)^l \df_{a, \ldots, a}(t^{-1})
\end{align*}
in $\Z_p\langle t, t^{-1}, h_{a, \ldots, a}(t)^{-1}\rangle$, where $\df_{a, \ldots, a}(t^{-1})$ is defined by $\iota (\df_{a, \ldots, a}(t))$.  
If $p=2$, then the above holds up to sign. 
\end{conj}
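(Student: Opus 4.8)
The plan is to realize $\df_{a,\dots,a}(t)$ through the Frobenius structure on the hypergeometric $F$-isocrystal $\mathcal{H}_a$ attached to $P_{\mathrm{HG},(a,\dots,a)}=D^{d+1}-t(D+a)^{d+1}$ (the overconvergent $F$-isocrystal underlying Dwork's construction, whose existence is exactly what makes $\df_{a,\dots,a}$ an element of the Tate algebra, cf.\ \cite[Corollary 2.3]{A3}), and to deduce the transformation formula from the symmetry of the \emph{equal-parameter} hypergeometric equation under $t\mapsto t^{-1}$.

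First I would record this symmetry at the level of differential operators. Writing $\iota$ also for the substitution $t\mapsto t^{-1}$ on functions, one has $D\mapsto -D$, and a direct computation shows that conjugating $P_{\mathrm{HG},(a,\dots,a)}$ by $f\mapsto t^{-a}f(t^{-1})$ and clearing a unit returns $P_{\mathrm{HG},(a,\dots,a)}$ in the new variable; equivalently, $t^{-a}F_{a,\dots,a}(t^{-1})$ is a solution of $P_{\mathrm{HG},(a,\dots,a)}$ near $t=\infty$. It is here that the hypothesis $a_0=\dots=a_d$ is essential: for non-equal parameters the pullback along $\iota$ interchanges the local data at $0$ and $\infty$ and produces a hypergeometric operator with different parameters, so no such self-symmetry is available. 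Consequently there is an isomorphism of connections $\iota^{*}(\mathcal{H}_a,\nabla)\cong(\mathcal{H}_a,\nabla)\otimes\mathcal{K}$, where $\mathcal{K}$ is the rank-one connection ``$t^{-a}$'', carrying the holomorphic solution at $\infty$ to the holomorphic solution at $0$ twisted by $\mathcal{K}$.

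Next I would promote this to an isomorphism of $F$-isocrystals: both sides carry Frobenius structures — $\mathcal{H}_a$ by Dwork's construction, $\mathcal{K}$ by the standard Frobenius of the rank-one tame object, and $\iota^{*}(\mathcal{H}_a,\nabla)$ by transport along $\iota$ — and, since $\mathcal{H}_a$ has essentially only scalar horizontal endomorphisms (for $p$ odd; the degenerate integer parameters can be treated by a limiting argument), the connection isomorphism intertwines the two Frobenius structures up to a scalar constant, which one pins down by evaluation at a point. Reading off the entry of the Frobenius matrix relating the holomorphic solutions on both sides of the isomorphism, and using $\df_{a,\dots,a}(t)=F_{a,\dots,a}(t)/F_{a',\dots,a'}(t^{p})$ together with the corresponding Frobenius relation for the solutions at $\infty$, one obtains $\df_{a,\dots,a}(t)=c(t)\,\df_{a,\dots,a}(t^{-1})$, where $c(t)$ is the Frobenius of $\mathcal{K}$. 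Finally $c(t)$ is computed directly: the Frobenius of $\mathcal{K}$ matches ``$t^{-a'}$'' at $t^{p}$ with ``$t^{-a}$'' at $t$, introducing the monomial $t^{\,pa'-a}=t^{\,l}$ since $pa'=a+l$, while the normalization of the Frobenius of a rank-one tame isocrystal contributes the sign, which for $p$ odd equals $(-1)^{(d+1)l}$ and for $p=2$ is only defined up to $\pm1$; hence $c(t)=((-1)^{d+1}t)^{l}$, with the stated ambiguity when $p=2$.

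\textbf{Main obstacle.} I expect the hard part to be the precise evaluation of $c(t)$: getting both the monomial $t^{l}$ and the exact sign $(-1)^{(d+1)l}$ requires carefully tracking the Teichm\"uller units in the Frobenius of $\mathcal{K}$ — equivalently a reflection computation for the $p$-adic Gamma function — and it is this computation that degenerates at $p=2$; a secondary difficulty is making the dictionary ``$\df_{a,\dots,a}$ $=$ entry of the Frobenius matrix'' fully compatible with Dwork's normalization and his truncation congruences. An alternative, more elementary route avoids isocrystals: Dwork's congruences reduce the identity modulo $p^{n}$ to a transformation formula for the truncated hypergeometric polynomials $[F_{a,\dots,a}(t)]_{<p^{n}}$ under $t\leftrightarrow t^{-1}$, which one proves by Pochhammer-symbol reflection identities with due care about $p$-adic valuations; this is the route parallel to the finite-field identity of the appendix, which recovers the special case at Teichm\"uller points.
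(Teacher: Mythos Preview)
First, note that the statement you are attempting is Conjecture~1.1, which the paper does \emph{not} prove in full; it establishes only the special case $a\in N^{-1}\Z$, $p\nmid N$, $0<a<1$ (Theorem~1.2). Your proposal aims at arbitrary $a\in\Z_p$, which would be strictly stronger than what the paper achieves.

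Your strategy and the paper's share the same skeleton: realize $\df_{a,\dots,a}$ as a Frobenius eigenvalue and exploit the $t\leftrightarrow t^{-1}$ self-symmetry of the equal-parameter hypergeometric system. The paper, however, makes this concrete geometrically rather than at the abstract $F$-isocrystal level. It realizes the relevant $\mathscr{D}$-module inside $H^d_{\rm dR}$ of the hypergeometric scheme $(1-x_0^N)\cdots(1-x_d^N)=t$ (this is where the hypothesis $a=1-n/N$ enters), passes to an explicit quotient $V^*$, and writes down an honest \emph{scheme} isomorphism $\iota\colon V^*_{t_0^N}\to V^*_{t_0^{-N}}$. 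Because $\iota$ is a morphism of varieties, its pullback is automatically Frobenius-equivariant, and the scalar is read off from the elementary computation $\iota^*\omega_n=(-1)^{(d+1)n-1}\xi^{-\varphi_{N,d}(n)}t_0^{N-n}\omega_n$ on a distinguished differential form; the remainder of the proof is a parity case-analysis (on $N,d,m,n$) converting this coefficient into $((-1)^{d+1}t)^l$.

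Your abstract argument has a genuine gap exactly at this point. Once you know $\iota^*\mathcal{H}_a\cong\mathcal{H}_a\otimes\mathcal{K}$ as connections, irreducibility only tells you the two Frobenii agree up to a global constant $c\in K^*$, and this $c$ is \emph{in addition to} whatever you compute as the Frobenius of $\mathcal{K}$. Your proposal to ``pin it down by evaluation at a point'' is not executable without further input: you would need an independent evaluation of $\df_{a,\dots,a}$ at some specific $t_0$, which is precisely what is unavailable. (One can argue $c^2=1$ by applying the involution twice, and then try to use the known mod~$p$ case to rule out $c=-1$ for odd $p$; but none of this is in your sketch.) Separately, your claim that the Frobenius of the rank-one object $\mathcal{K}$ contributes the sign $(-1)^{(d+1)l}$ is unexplained: the dependence on $d+1$ cannot come from a rank-one Kummer isocrystal alone, and in the paper this sign emerges only after the explicit, somewhat delicate case analysis mentioned above. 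The paper's geometric approach buys exactly this: no undetermined constant, at the price of restricting to rational $a$.
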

It is known that Conjecture \ref{conj} holds modulo $p$ (\cite[Proposition 4.11 (1)]{Wang}). 
Wang \cite[Theorem 4.6]{Wang2} proves that Conjecture \ref{conj} is true for $d=0$ by direct computations. 
Wang \cite[Theorem 4.20]{Wang} also proves that Conjecture \ref{conj} is true for $d=1$ under the following two conditions: 
\begin{enumerate}
\item $a \in N^{-1} \Z$ and $p > N$ for some $N \geq 2$,   
\item  $0 < a < 1$.   
\end{enumerate}
The purpose of this paper is to generalize this result as follows. 

\begin{thm} \label{main:1}
Suppose that the following two conditions: 
\begin{enumerate} 
\item $a \in N^{-1} \Z$ and $p \nmid N$ for some $N \geq 2$,   
\item  $0 < a < 1$.  
\end{enumerate}
Then, Conjecture \ref{conj} is true for general $d \geq 1$. 
Furthermore, if $p=2$,  
$$\df_{a, \ldots, a}(t)=
-((-1)^{d+1}t)^l \df_{a, \ldots, a}(t^{-1})
$$
holds 
if and only if $d$ is even and $a' \equiv 1 \pmod{2}$. 
\end{thm}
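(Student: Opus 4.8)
\emph{Outline of proof.}
The starting point is the interpretation, recalled above and developed in \cite{A3}, of $\mathscr{F}^{\rm Dw}_{\ua}(t)$ as the horizontal section (normalized by $\mathscr{F}^{\rm Dw}_{\ua}(0)=1$) of the rank-one unit-root $F$-isocrystal $\mathscr{U}_{\ua}$ cut out inside the hypergeometric $F$-isocrystal attached to $P_{{\rm HG},\ua}$; concretely this is the content of the Frobenius identity $\mathscr{F}^{\rm Dw}_{\ua}(t)=F_{\ua}(t)F_{\ua'}(t^{p})^{-1}$ together with Dwork's congruences. Since $\mathscr{F}^{\rm Dw}_{a,\ldots,a}(t^{-1})=\iota(\mathscr{F}^{\rm Dw}_{a,\ldots,a}(t))$ is the corresponding horizontal section of $\iota^{*}\mathscr{U}_{a,\ldots,a}$ over the annulus $\spec\Z_p\langle t,t^{-1},h_{a,\ldots,a}(t)^{-1}\rangle$, the identity of Conjecture~\ref{conj} is equivalent to the assertion that $\iota^{*}\mathscr{U}_{a,\ldots,a}$ is isomorphic, as an $F$-isocrystal on the annulus, to $\mathscr{U}_{a,\ldots,a}$ twisted by the rank-one object trivialized by $((-1)^{d+1}t)^{-l}$, the comparison isomorphism being the one matching the chosen normalizations.

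I would establish this in two moves. The first is purely at the level of connections: $P_{{\rm HG},(a,\ldots,a)}=D^{d+1}-t(D+a)^{d+1}$ has local exponents $\{0^{(d+1)}\}$ at $0$ and $\{a^{(d+1)}\}$ at $\infty$, and the substitution $t\mapsto t^{-1}$ (under which $D\mapsto-D$, producing the global scalar $(-1)^{d+1}$) is the rigidity-preserving operation interchanging these two singular points; since the involution $a\mapsto 1-a$ commutes with $a\mapsto a'$ and since $(a^{(i)})_{i\ge 0}$ is purely periodic on the finite set $N^{-1}\Z\cap(0,1)$ (this is where $p\nmid N$ enters, uniformly in $d$), the pulled-back connection identifies back with that of $\mathscr{U}_{a,\ldots,a}$ up to a Kummer twist by $t^{m}$. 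The second move pins down $m$ and upgrades the identification to $F$-isocrystals: a rank-one unit-root $F$-isocrystal is rigid, so any horizontal isomorphism of the ambient hypergeometric isocrystals over the annulus restricts to one of the unit-root lines, and the induced map of rank-one objects differs from the obvious one by a monomial times a unit; comparing leading terms at $t=0$ (using $\mathscr{F}^{\rm Dw}(0)=1$) shows the monomial is exactly $((-1)^{d+1}t)^{l}$ — the exponent $l=pa'-a$ being precisely the discrepancy introduced by the Dwork-prime shift in the denominator $F_{\ua'}(t^{p})$ — and, for $p$ odd, that the unit equals $1$. This proves Conjecture~\ref{conj} for all $d\ge 1$.

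For $p=2$ the second move leaves a genuine sign ambiguity: the Frobenius structure on $\mathscr{U}_{a,\ldots,a}$ involves, through the $d+1$ equal parameters, a quadratic Gauss-sum factor that for $p=2$ takes values in $\Z_2$ rather than in a ramified quadratic extension exactly when the associated residue character is quadratic, which after unwinding means exactly when $a'\equiv 1\pmod 2$; each of the $d+1$ slots contributes one such sign, so the total ambiguity is $(-1)^{d+1}$ times a factor that is nontrivial if and only if $a'\equiv 1\pmod 2$. To make this rigorous I would expand both sides of the claimed identity modulo $4$ via the mod-$4$ Dwork congruence $\mathscr{F}^{\rm Dw}_{a,\ldots,a}(t)\equiv[F_{a,\ldots,a}(t)]_{<4}[F_{a',\ldots,a'}(t^{2})]_{<4}^{-1}\pmod{4}$ and read off the sign, obtaining that the minus sign occurs precisely when $d$ is even and $a'\equiv1\pmod 2$.

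The crux — the one place that is more than bookkeeping — is the second move: the substitution $t\mapsto t^{-1}$ does not preserve the residue disc at $0$, so the Frobenius lift adapted to $t=0$ and the one adapted to $t=\infty$ must be reconciled on the annulus, and it is exactly this reconciliation that forces the integer exponent $l$ (rather than the naive geometric exponent $a$ coming from the local exponents at $\infty$) and, for $p=2$, produces the quadratic-Gauss-sum sign. By contrast the periodicity of the Dwork-prime sequence and the mod-$4$ computation are routine.
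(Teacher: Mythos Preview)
Your strategy --- compare the unit-root $F$-isocrystals at $t$ and at $t^{-1}$ and read off the discrepancy --- is the right shape, but the ``second move'' does not close. The unit-root line $\mathscr{U}_{a,\ldots,a}$ is defined by the Frobenius slope filtration, not by the connection alone, so a horizontal isomorphism of the ambient hypergeometric connections produced in your first move has no a priori reason to respect it. Your appeal to ``rigidity'' of rank-one $F$-isocrystals is circular here: to invoke it you would already need to know that $\iota^*\mathscr{U}_{a,\ldots,a}$ and $\mathscr{U}_{a,\ldots,a}$ (after the Kummer twist) carry compatible Frobenii, which is exactly what you are trying to prove. Even granting that the ratio $\df_{a,\ldots,a}(t)/\df_{a,\ldots,a}(t^{-1})$ is a monomial times a constant, your proposed determination of that constant by ``comparing leading terms at $t=0$'' is not well-posed: $\df_{a,\ldots,a}(t^{-1})$ is defined as $\iota(\df_{a,\ldots,a}(t))$ in the Tate algebra $\Z_p\langle t,t^{-1},h(t)^{-1}\rangle$, and $t=0$ is not in its domain, so there is no leading term to read off. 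The $p=2$ discussion inherits the same gap; the mod-$4$ congruence would pin down a sign only once the monomial factor is already known.

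The paper fills this gap geometrically rather than abstractly. It realises $H^d_{\rm dR}(U/A_K)^{\chi_{n,\ldots,n}}$ as the cohomology of an explicit quotient variety $V^*_t$ and writes down, by hand, an isomorphism of schemes $\iota\colon V^*_{t_0^N}\to V^*_{t_0^{-N}}$ (essentially $z_i\mapsto z_i^{-1}$, $w\mapsto w/(z_1^2\cdots z_d^2\,t_0^{N-1})$, with an $N$th root of $-1$ inserted when $N$ and $d$ are both even). Because this is a morphism of varieties, the induced map on de Rham cohomology is automatically compatible with the Frobenius $\Phi$, so there is nothing to upgrade. One then computes $\iota^*\omega_n=(-1)^{(d+1)n-1}\xi^{-\varphi_{N,d}(n)}t_0^{N-n}\omega_n$ explicitly, feeds this into Asakura's formula $\Phi(\omega_{m,\ldots,m})\equiv p^d\df_{\ua}(t)^{-1}\omega_{n,\ldots,n}$ modulo $VH^d_{\rm dR}$ on both $U$ and $\widehat{U}$, and equates the two resulting expressions. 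All the constants --- the exponent $l=pa_m-a_n$ and the sign, including the $p=2$ dichotomy --- then fall out of an elementary parity computation with $m,n,N,d$. What your outline is missing is precisely this explicit geometric intertwiner; without it (or a genuine substitute, e.g.\ an irreducibility-plus-uniqueness argument for the full hypergeometric $F$-isocrystal, which you do not supply), the Frobenius comparison remains unjustified.
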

We give a sketch of the proof.  
The key tool of the proof is Theorem \ref{frob} (\cite[Theorem 4.6]{A}), which describes the eigenvalue of a Frobenius action on the de Rham cohomology group of the hypergeometric scheme 
$$(1-x_0^N) \cdots (1-x_d^N)=t$$
which is introduced by Asakura \cite{A2} (see Section \ref{HGsch}), in terms of Dwork's $p$-adic hypergeometric function. 
Let $W=W(\overline{\F}_p)$ be the Witt ring of $\overline{\F}_p$ and $K$ be the fractional field of $W$. 
For $t \in K$, $t \neq  0, 1$,  
let $V^*_t$ be the affine scheme over $K$ defined by 
$$w^N=z_1 \cdots z_d (1-z_1)^{N-1} \cdots (1-z_d)^{N-1}(z_1 \cdots z_d -t)^{N-1}, \quad w \neq 0$$
which is a quotient of the hypergeometric scheme 
(see Section \ref{proof}). 
Put $t=t_0^N$. 
The key idea of the proof is to construct an isomorphism 
\begin{align*}
&\iota \colon V^*_{t_0^N}  \to V^*_{t_0^{-N}} ; \quad (z_1, \ldots, z_d, w) \mapsto (u_1, \ldots, u_d, v) 
\end{align*}
given by 
\begin{align*}
u_i=z_i^{-1}  (i=1, \ldots, d), \quad  v=
\left\{
\begin{array}{ll}
\frac{\xi w}{z_1^2 \cdots z_d^2 t_0^{N-1}}  & (\textit{$N$ and $d$ are even}), \\
\frac{w}{z_1^2 \cdots z_d^2 t_0^{N-1}}& (otherwise),
\end{array}
\right.
\end{align*}
where $\xi \in K^*$ such that $\xi^N=-1$.  
We obtain the theorem by comparing the eigenvalues of a Frobenius action on the de Rham cohomology groups of $V^*_{t_0^N}$ and $V^*_{t_0^{-N}}$, where the former is described by $\df_{a, \ldots, a}(t_0^{N})$ and the latter is described by $\df_{a, \ldots, a}(t_0^{-N})$, respectively. 

This paper is constructed as follows. 
In Section \ref{HGsch}, we recall some properties of hypergeometric schemes according to \cite{A}. 
In Section \ref{proof}, we prove Theorem \ref{main:1}.  
In the appendix, we introduce a finite analogue of Theorem \ref{main:1}: the transformation formula of a hypergeometric function over a finite field between $t$ and $t^{-1}$, which implies the special case of Theorem \ref{main:1}.

\subsection{Notations}
Let $N$ be a positive integer and  $p$ be a prime such that $p \nmid N$. 
Let  $W=W(\Fp)$ denote the Witt ring of $\Fp$ and $K$ denote the fractional field of $W$. 
Put $A=W[t, (t-t^2)^{-1}]$ and $A_K = K \otimes_W A$. 
Let $\mu_N$ denote the group of $N$th roots of unity in $K$.    
For a smooth morphism $\pi \colon X \to \spec A_K$ of affine schemes over $K$, 
let $\mathscr{H}^i_{\rm dR}(X/A_K)$ denote the relative de Rham cohomology group $R^i \pi_* \Omega^{\bullet}_{X/A_K}$, and $H^i_{\rm dR}(X/A_K)$ denote the global section of $\mathscr{H}^i_{\rm dR}(X/A_K)$.  
For a finite abelian group $G$ and a character 
$\chi \in \operatorname{Hom}(G, K^*)$, 
 we write
$$e^{\chi}= \dfrac1{|G|} \sum_{g \in G} {\chi}(g^{-1}) g \in K[G]$$
for the corresponding projector. 
If $G$ acts on $\mathscr{H}^i_{\rm dR}(X/A_K)$, we write $\mathscr{H}^i_{\rm dR}(X/A_K)^{\chi}$ for the image of the projector $e_{\chi}$. 

\section{Hypergeometric scheme} \label{HGsch}
Let $f \colon U \to \spec A_K$ be a family of affine schemes over $K$, whose general fiber $U_t=f^{-1}(t)$ is defined by 
$$(1-x_0^N)\cdots (1-x_d^N)=t. $$
This is smooth over $A_K$.    
We call $U$ the hypergeometric scheme over $A_K$.  
The group $\mu_N^{d+1}$ acts on $U$ over $A_K$ by 
\begin{align*}
&(\nu_0, \ldots, \nu_{d})\cdot (x_0, \cdots, x_d)=(\nu_0 x_0, \ldots, \nu_d x_d),  \quad (\nu_0, \ldots, \nu_d) \in \mu_N^{d+1}. 
\end{align*}
This action induces an action on $H_{\rm dR}^d(U/A_K)$, 
hence $H_{\rm dR}^d(U/A_K)$ becomes an $A_K[\mu_N^{d+1}]$-module. 
Let $I_+=\{(i_0, \ldots, i_d) \in \Z^{d+1} \mid 0 < i_k < N\}$.  
In this section, we suppose that $(i_0, \ldots, i_d) \in I_+$. 
Put $\ua=(1-i_0/N, \ldots, 1-i_d/N)$ and $\check{\ua}=(i_0/N, \ldots, i_d/N)$.  
Let $\mathscr{D}=K\langle t, (t-t^2)^{-1}, \frac{d}{dt} \rangle$. 
For a character $\chi_{i_0, \ldots, i_d} \in \operatorname{Hom}(\mu_N^{d+1}, K^*)$ given by 
$$(\nu_0, \ldots, \nu_d) \mapsto  \nu_0^{i_0} \cdots \nu_d^{i_d}, $$
there is an isomorphism 
\begin{align*} 
\mathscr{D}/\mathscr{D} P_{{\rm HG}, \ua} \xrightarrow{\simeq} H^d_{\rm dR}(U/A_K)^{\chi_{i_0, \ldots, i_d}}; \quad P \mapsto P(\omega_{i_0, \ldots, i_d})
\end{align*}
as $\mathscr{D}$-modules (see \cite[Corollary 3.6]{A2}). 
Here, $\omega_{i_0, \ldots, i_d}$ is the  holomorphic $d$-form defined by 
$$\omega_{i_0, \ldots, i_d}=N^{-1} x_0^{i_0-N }x_1 ^{i_1-1} \cdots x_d^{i_d-1}\frac{dx_1 \wedge \cdots \wedge dx_d}{(1-x_1^N) \cdots (1-x_d^N)}. $$
We put $H^d_{{\rm dR}}(U/A_K)^{\chi_{i_0, \ldots, i_d}}_{K((t))}=K((t)) \otimes_{A_K} H^d_{{\rm dR}}(U/A_K)^{\chi_{i_0, \ldots, i_d}}$. 

\begin{prop}[{\cite[Proposition 4.1]{A}}]
Put $D=\frac{d}{dt}$. 
There is an element $\widehat{\eta}_{i_0, \ldots, i_d} \in H^d_{{\rm dR}}(U/A_K)^{\chi_{i_0, \ldots, i_d}}_{K((t))}$ such that 
$$\operatorname{Ker}\left[ D \colon H^d_{{\rm dR}}(U/A_K)^{\chi_{i_0, \ldots, i_d}}_{K((t))} \to H^d_{{\rm dR}}(U/A_K)^{\chi_{i_0, \ldots, i_d}}_{K((t))} \right] =K \widehat{\eta}_{i_0, \ldots, i_d}.$$
\end{prop}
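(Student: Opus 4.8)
The plan is to pass through the cyclic presentation $\mathscr{D}/\mathscr{D}P_{{\rm HG},\ua}\xrightarrow{\simeq}M$, where $M:=H^d_{{\rm dR}}(U/A_K)^{\chi_{i_0,\ldots,i_d}}$, and to reduce the statement to the behaviour of the hypergeometric operator at $t=0$. Write $M_{K((t))}=K((t))\otimes_{A_K}M$ and $\theta=t\tfrac{d}{dt}$, so that $P_{{\rm HG},\ua}=\theta^{d+1}-t(\theta+a_0)\cdots(\theta+a_d)$. I would first note that, regarded as a differential operator in $D=\tfrac{d}{dt}$ with coefficients in $A_K=K[t,(t-t^2)^{-1}]$, the operator $P_{{\rm HG},\ua}$ has leading coefficient $t^{d+1}(1-t)$, a unit in $A_K$; hence $M$ is a free $A_K$-module of rank $d+1$, with basis $\omega_{i_0,\ldots,i_d},D\omega_{i_0,\ldots,i_d},\ldots,D^d\omega_{i_0,\ldots,i_d}$, on which $D$ acts as the companion connection of $P_{{\rm HG},\ua}$. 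Since the field of $D$-constants of $K((t))$ is $K$, the kernel of $D$ on $M_{K((t))}$ is automatically a $K$-vector space of dimension at most $d+1$, so the real content is that this dimension equals one.

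The key point will be the behaviour at $t=0$. The operator $P_{{\rm HG},\ua}$ is regular singular with singular locus contained in $\{0,1,\infty\}$ (classical for hypergeometric operators), and reducing $P_{{\rm HG},\ua}=\theta^{d+1}-t(\theta+a_0)\cdots(\theta+a_d)$ modulo $t$ leaves $\theta^{d+1}$, so its indicial polynomial at $t=0$ is $s^{d+1}$; thus the only local exponent at $t=0$ is $0$, with multiplicity $d+1$. More concretely, a formal power series $\sum_{n\geq 0}c_n t^n$ annihilated by $P_{{\rm HG},\ua}$ satisfies the first-order recursion $n^{d+1}c_n=(n-1+a_0)\cdots(n-1+a_d)c_{n-1}$, so it is determined by $c_0$ and the space of such solutions is $K\cdot F_{\ua}(t)$. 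Being a scalar regular singular operator of order $d+1$ whose indicial polynomial at $t=0$ has a single root of full multiplicity, $P_{{\rm HG},\ua}$ has local monodromy at $t=0$ equal to a single unipotent Jordan block; equivalently, $M$ with its Gauss--Manin connection is maximally unipotent at $t=0$. By the structure theory of regular singular connections over $K((t))$, the dimension of the kernel of $D$ on $M_{K((t))}$ equals the dimension of the invariant subspace of the local monodromy at $t=0$, which for a single unipotent Jordan block is one. Defining $\widehat{\eta}_{i_0,\ldots,i_d}$ to be a generator of this line then finishes the proof.

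The step I expect to require the most care is the passage from the scalar operator $P_{{\rm HG},\ua}$ to the connection $M$: the flat sections of $\mathscr{D}/\mathscr{D}P_{{\rm HG},\ua}$ are not literally the solutions of $P_{{\rm HG},\ua}$ --- already for $d=0$ one computes the flat section to be $F_{\ua}(t)^{-1}\omega_{i_0,\ldots,i_d}$, the reciprocal of the solution $F_{\ua}(t)$ --- so the argument has to go through maximal unipotence of the common local monodromy and the structure theory of regular singular connections over $K((t))$, rather than through solving the scalar equation directly. The two auxiliary points I would want to pin down carefully are that $M$ really is regular singular at $t=0$ (inherited from $P_{{\rm HG},\ua}$) and that a scalar regular singular operator of order $d+1$ with a single indicial root of full multiplicity at $t=0$ has single-Jordan-block monodromy there; the rank count, the recursion, and the indicial-polynomial computation are routine.
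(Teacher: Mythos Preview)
The paper does not supply a proof of this proposition; it is quoted from \cite[Proposition~4.1]{A} and used as a black box, so there is nothing in the present paper to compare your argument against.

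That said, your outline is a sound way to establish the result independently. The rank computation, the indicial polynomial $s^{d+1}$ at $t=0$, and the recursion for power-series solutions of $P_{{\rm HG},\ua}$ are all correct. The one point you rightly flag as delicate---that the residue at $t=0$ is a \emph{single} nilpotent Jordan block rather than merely nilpotent---can be made explicit without going through the solutions/flat-sections duality you worry about. Take the $K[[t]]$-lattice in $M_{K((t))}$ spanned by $e_k=\theta^k\omega_{i_0,\ldots,i_d}$ for $0\le k\le d$; from the relation
\[
\theta^{d+1}\omega_{i_0,\ldots,i_d}=\frac{t}{1-t}\bigl((\theta+a_0)\cdots(\theta+a_d)-\theta^{d+1}\bigr)\omega_{i_0,\ldots,i_d}
\]
one sees that this lattice is $\nabla_\theta$-stable, and the induced endomorphism of its reduction modulo $t$ sends $e_k\mapsto e_{k+1}$ for $k<d$ and $e_d\mapsto 0$, i.e.\ is a single nilpotent Jordan block $N$. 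Since no two eigenvalues of $N$ differ by a nonzero integer, the connection over $K((t))$ is formally gauge-equivalent to the constant system $\theta+N$, whose horizontal sections in $K((t))$ are precisely the constants lying in $\ker N$, a one-dimensional space. This makes your appeal to the ``structure theory of regular singular connections'' concrete and avoids invoking monodromy in any literal topological sense, which over a $p$-adic base $K$ is only available as a formal analogue anyway.
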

For the polynomial $h_{\ua}(t)$ defined in \eqref{ha},  
we put  
$$h(t)=\prod_{(i_0, \ldots, i_d) \in I_+} h_{\ua}(t). $$
Put $B=A[h(t)^{-1}]$, $B_K=K \otimes_W B$, 
$$\hB=A[h(t)^{-1}]^{\wedge}:=\varprojlim_{n} \left(W/p^nW[t, (t-t^2)^{-1}, h(t)^{-1}] \right)$$
the $p$-adic completion and $\hB_K= K \otimes_W \hB$. 
By the congruence relation of Dwork's $p$-adic hypergeometric function, $\df_{\ua}(t)$ becomes an element of $\widehat{B}_K$, which is invertible (cf. \cite[Corollary 2.3]{A3}). 
Put 
$$H^d_{{\rm dR}}(U/A_K)^{\chi_{i_0, \ldots, i_d}}_{\hB_K}= \hB_K \otimes_{A_K} H^d_{{\rm dR}}(U/A_K)^{\chi_{i_0, \ldots, i_d}}. $$
We define  the unit root vector by 
$$\eta_{i_0, \ldots, i_d}= F_{\cua}(t)^{-1} \widehat{\eta}_{i_0, \ldots, i_d}, $$
which defines an element of $H^d_{{\rm dR}}(U/A_K)^{\chi_{i_0, \ldots, i_d}}_{\hB_K}$ (see \cite[Lemma 4.2]{A}).  
Put 
$$H^d_{{\rm dR}}(U/A_K)^{\chi_{i_0, \ldots, i_d}}_{\hB_K, \text{unit}} = \hB_K \eta_{i_0, \ldots, i_d}. $$
There is a perfect pairing 
\begin{align*} 
Q \colon H^d_{{\rm dR}}(U/A_K)^{\chi_{i_0, \ldots, i_d}}_{\hB_K} \otimes_{\hB_K} H_{\rm dR} (U / A_K)^{\chi_{-i_0, \ldots, -i_d}}_{\hB_K} \to \hB_K
\end{align*} 
(see \cite[Section 3.2 (3.5)]{A}). 
Let 
$$VH^d_{{\rm dR}}(U/A_K)^{\chi_{i_0, \ldots, i_d}}_{\hB_K} \subset H^d_{{\rm dR}}(U/A_K)^{\chi_{i_0, \ldots, i_d}}_{\hB_K}$$
be a $\hB_K$-submodule which is the exact annihilator of $H^d_{{\rm dR}}(U/A_K)^{\chi_{-i_0, \ldots, -i_d}}_{\hB_K, \text{unit}} $. 
Then we have 
\begin{align}
H^d_{{\rm dR}}(U/A_K)^{\chi_{i_0, \ldots, i_d}}_{\hB_K}/VH^d_{{\rm dR}}(U/A_K)^{\chi_{i_0, \ldots, i_d}}_{\hB_K} \simeq \hB_K  \omega_{i_0, \ldots, i_d} \label{VH}
\end{align}
(see \cite[Lemma 4.3]{A}).

Let $A_K^{\dagger}$ be the weak completion of $A_K$ (cf. \cite[p.135]{LS}).  
Let $\sigma$ be the $F$-linear Frobenius on $A_K^{\dagger}$ given by $\sigma(t)=t^p$, where $F$ is the $p$th Frobenius on $W$. 
Then there is a $p$th Frobenius $\Phi$ induced by $\sigma$ acting on 
$$H^d_{{\rm dR}}(U/A_K)^{\chi_{i_0, \ldots, i_d}}_{A_K^{\dagger}}:=A_K^{\dagger}\otimes_{A_K} H^d_{{\rm dR}}(U/A_K)^{\chi_{i_0, \ldots, i_d}}. $$ 
The Frobenius $\sigma$ extends to $\hB_K$, hence we can extend $\Phi$ to that on $H^d_{{\rm dR}}(U/A_K)^{\chi_{i_0, \ldots, i_d}}_{\hB_K}$ and denote it by the same notation.

\begin{thm}[{\cite[Theorem 4.6]{A}}] \label{frob}
For $(i_0, \ldots, i_d), (j_0, \ldots, j_d) \in I_+$, we suppose that $p j_k \equiv i_k  \pmod{N}$ for $k=0, \ldots, d$. 
Then we have 
$$\Phi(\omega_{j_0, \ldots, j_d}) \equiv p^d \df_{\ua} (t)^{-1} \omega_{i_0, \ldots, i_d} \mod{VH^d_{{\rm dR}}(U/A_K)^{\chi_{i_0, \ldots, i_d}}_{\hB_K}}, $$
where $\ua=(1-i_0/N, \ldots, 1-i_d/N)$. 
\end{thm}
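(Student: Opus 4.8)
The plan is to read off the coefficient $p^d\df_{\ua}(t)^{-1}$ by working in the rank-one quotient \eqref{VH} and pairing against the unit root vector, after first determining how $\Phi$ acts on the unit root line. Throughout I abbreviate $\omega_i=\omega_{i_0,\ldots,i_d}$, $\omega_j=\omega_{j_0,\ldots,j_d}$, $\eta_{-i}=\eta_{-i_0,\ldots,-i_d}$, $\eta_{-j}=\eta_{-j_0,\ldots,-j_d}$ (and similarly for $\widehat{\eta}$), and $VH^{\chi_i}=VH^d_{\rm dR}(U/A_K)^{\chi_{i_0,\ldots,i_d}}_{\hB_K}$. Since $\Phi$ is induced by the $p$-power Frobenius, it twists the $\mu_N^{d+1}$-action by $\nu\mapsto\nu^p$ and hence multiplies character exponents by $p$; the hypothesis $pj_k\equiv i_k\pmod N$ is exactly what makes $\Phi$ carry the $\chi_{j_0,\ldots,j_d}$-eigenspace into the $\chi_{i_0,\ldots,i_d}$-eigenspace, so that $\Phi(\omega_j)$ lies in $H^d_{\rm dR}(U/A_K)^{\chi_{i_0,\ldots,i_d}}_{\hB_K}$. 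Because $VH^{\chi_i}$ is by definition the exact annihilator of $\hB_K\eta_{-i}$ under $Q$, the class of $\Phi(\omega_j)$ modulo $VH^{\chi_i}$ is faithfully detected by $Q(\Phi(\omega_j),\eta_{-i})$, and it suffices to find the scalar $c\in\hB_K$ with $Q(\Phi(\omega_j),\eta_{-i})=c\,Q(\omega_i,\eta_{-i})$ and to show $c=p^d\df_{\ua}(t)^{-1}$.

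First I would compute the Frobenius action on the unit root line. As $\Phi$ is horizontal for the Gauss--Manin connection, it carries the one-dimensional space of flat sections into itself, so $\Phi(\widehat{\eta}_{-j})=c'\,\widehat{\eta}_{-i}$ for some $c'\in K^\times$. The congruence $pj_k\equiv i_k\pmod N$ together with $0<i_k,j_k<N$ identifies the lower hypergeometric parameters attached to the eigenspaces $\chi_{-i}$ and $\chi_{-j}$ as $\ua=(1-i_0/N,\ldots,1-i_d/N)$ and $\ua'=(1-j_0/N,\ldots,1-j_d/N)$ respectively, so that $\eta_{-i}=F_{\ua}(t)^{-1}\widehat{\eta}_{-i}$ and $\eta_{-j}=F_{\ua'}(t)^{-1}\widehat{\eta}_{-j}$. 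Since $\sigma(F_{\ua'}(t))=F_{\ua'}(t^p)$, this yields
\begin{align*}
\Phi(\eta_{-j})=\sigma(F_{\ua'}(t))^{-1}\,\Phi(\widehat{\eta}_{-j})=c'\,\frac{F_{\ua}(t)}{F_{\ua'}(t^p)}\,\eta_{-i}=c'\,\df_{\ua}(t)\,\eta_{-i},
\end{align*}
so the unit root eigenvalue of $\Phi$ is $c'\,\df_{\ua}(t)$.

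Next I would invoke the compatibility of $Q$ with $\Phi$. As $Q$ is the cup-product (Poincar\'e duality) pairing in middle degree $d$ (see \cite{A}), it satisfies $Q(\Phi x,\Phi y)=p^d\,\sigma(Q(x,y))$, the factor $p^d$ being the weight-$d$ Tate twist on $H^{2d}$. Substituting $\eta_{-i}=(c'\,\df_{\ua}(t))^{-1}\Phi(\eta_{-j})$ into $Q(\Phi(\omega_j),\eta_{-i})$ and applying this compatibility gives
\begin{align*}
Q(\Phi(\omega_j),\eta_{-i})=\frac{p^d\,\sigma(Q(\omega_j,\eta_{-j}))}{c'\,\df_{\ua}(t)},
\end{align*}
whence $c=\dfrac{p^d\,\sigma(Q(\omega_j,\eta_{-j}))}{c'\,\df_{\ua}(t)\,Q(\omega_i,\eta_{-i})}$. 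The factors $p^d$ and $\df_{\ua}(t)^{-1}$ have now appeared, and it remains only to check that the residual scalar equals $1$.

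The final and hardest step is the evaluation of the period pairings and of the constant $c'$. Since $\widehat{\eta}_{-i}$ is flat, $g_i(t):=Q(\omega_i,\widehat{\eta}_{-i})$ satisfies the (transpose of the) hypergeometric equation $P_{{\rm HG},\ua}$, hence $g_i(t)=q_i F_{\ua}(t)$ for a constant $q_i$; therefore $Q(\omega_i,\eta_{-i})=q_i$ is constant, and likewise $Q(\omega_j,\eta_{-j})=q_j$. The theorem thereby reduces to the single scalar identity $\sigma(q_j)=c'\,q_i$. I expect to establish this by an explicit Dwork-style computation: normalizing $\widehat{\eta}_{-i}$ and $\widehat{\eta}_{-j}$ compatibly and analyzing the regular singular point $t=0$ (where $\df_{\ua}(0)=1$ and the local exponents of $P_{{\rm HG},\ua}$ are explicit), or equivalently evaluating the relevant residues and $p$-adic Gamma factors. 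Pinning down these absolute constants so that the leftover scalar is \emph{exactly} $1$, rather than merely a unit, is the main obstacle and is where essentially all of the technical work resides.
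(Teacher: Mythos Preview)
The paper does not prove this theorem at all: it is quoted verbatim as \cite[Theorem 4.6]{A} and used as a black box in Section~\ref{proof}. There is therefore no ``paper's own proof'' against which to compare your proposal.

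That said, your outline is a plausible reconstruction of the strategy one would expect in \cite{A}: use that $VH^{\chi_i}$ is the exact annihilator of the unit root line so that the class of $\Phi(\omega_j)$ modulo $VH^{\chi_i}$ is detected by pairing with $\eta_{-i}$; compute the Frobenius eigenvalue on the unit root via horizontality (this is where $\df_{\ua}(t)$ enters, as $F_{\ua}(t)/F_{\ua'}(t^p)$); and pull the factor $p^d$ out of the Poincar\'e-duality compatibility $Q(\Phi x,\Phi y)=p^d\sigma(Q(x,y))$. You are candid that the argument is not complete: the residual scalar identity $\sigma(q_j)=c'\,q_i$ is left to ``an explicit Dwork-style computation'' at $t=0$, and you correctly flag this normalization of constants as the place where the real work lies. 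As written, then, your proposal is a coherent roadmap rather than a proof, and the honest gap you identify is exactly the content one would have to extract from \cite{A} to close it.
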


\section{Proof of Theorem \ref{main:1}} \label{proof}
Let $g \colon V^* \to \spec A_K$ be a family of affine schemes over $K$, whose general fiber $V^*_t=g^{-1}(t)$ is defined by 
$$w^N=z_1 \cdots z_d (1-z_1)^{N-1} \cdots (1-z_d)^{N-1}(z_1 \cdots z_d -t)^{N-1}, \quad w \neq 0,  $$
which is smooth over $A_K$. 
Let $U$ be the hypergeometric scheme defined in Section \ref{HGsch} and $U^*$ be the open subscheme of $U$ defined by $x_0 \cdots x_d \neq 0$. 
Then there is a covering map 
\begin{align*} 
&\rho \colon U^* \to V^*; \quad \left\{
\begin{array}{ll}
w=x_0^{N-1} \cdots x_d^{N-1} (1-x_1^N) \cdots (1-x_d^N), \\
z_i=1-x_i^N  \quad (i=1, \ldots, d). 
\end{array}
\right.
\end{align*}     
The group $\mu_N$ acts on $V^*$ over $A_K$ by 
$$\nu \cdot (z_1, \ldots, z_d, w)=(z_1, \ldots, z_d, \nu^{-1} w), \quad \nu \in \mu_N. $$
The action induces an action on $H^d_{\rm dR}(V^*/A_K)$, hence $H^d_{\rm dR}(V^*/A_K)$ becomes an $A_K[\mu_N]$-module. 
For $n \in \Z$, let 
$$\chi_n \colon \mu_N \to K^*; \quad \nu \mapsto \nu^n$$
be a character of $\mu_N$. 
Then we have the following proposition. 
\begin{prop}
If $n \not \equiv 0 \pmod{N}$, then we have an isomorphism 
$$H^d_{\rm dR}(V^*/A_K)^{\chi_{n}} \simeq  H^d_{\rm dR}(U/A_K)^{\chi_{n, \ldots, n}}$$
as $\mathscr{D}$-modules.  
\end{prop}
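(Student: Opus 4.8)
The plan is to exploit the covering map $\rho\colon U^* \to V^*$ together with the quotient description of $\chi$-eigenspaces. First I would analyze the group theory: the map $\rho$ is a Galois covering whose deck transformation group is the subgroup $G \subset \mu_N^{d+1}$ preserving all the fibers of $\rho$. Concretely, since $z_i = 1-x_i^N$ is invariant under each $x_i \mapsto \nu_i x_i$, and $w = x_0^{N-1}\cdots x_d^{N-1}(1-x_1^N)\cdots(1-x_d^N)$ transforms by the character $(\nu_0,\ldots,\nu_d)\mapsto \nu_0^{N-1}\cdots\nu_d^{N-1} = \nu_0^{-1}\cdots\nu_d^{-1}$, the covering $U^* \to V^*$ realizes $V^*$ as the quotient of $U^*$ by the kernel $G$ of the character $(\nu_0,\ldots,\nu_d)\mapsto \nu_0\cdots\nu_d$ on $\mu_N^{d+1}$, a group of order $N^d$. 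Then $H^d_{\rm dR}(V^*/A_K) \simeq H^d_{\rm dR}(U^*/A_K)^G$, and the residual $\mu_N \simeq \mu_N^{d+1}/G$-action (via $\nu\cdot w = \nu^{-1}w$, i.e.\ via the diagonal $\nu\mapsto(\nu,1,\ldots,1)$ modulo $G$, or equivalently the product character) matches up so that the $\chi_n$-eigenspace on the $V^*$ side corresponds to the sum of $\chi_{i_0,\ldots,i_d}$-eigenspaces on the $U^*$ side over all $(i_0,\ldots,i_d)$ with $i_0\equiv\cdots\equiv i_d\equiv n \pmod N$ — but among these only the "diagonal" tuple can contribute a nonzero class, because the others fail to be $G$-invariant. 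I would make this precise by checking that $e^{\chi_{i_0,\ldots,i_d}} H^d_{\rm dR}(U^*/A_K)$ is killed by the averaging projector $\frac{1}{|G|}\sum_{g\in G} g$ unless $i_0 = \cdots = i_d$.

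Second, I would pass from $U^*$ back to $U$. The open immersion $U^* \hookrightarrow U$ induces a restriction map on $H^d_{\rm dR}$; on the $\chi_{i_0,\ldots,i_d}$-eigenspace with $0 < i_k < N$ (i.e.\ $(i_0,\ldots,i_d)\in I_+$, which holds precisely when $n\not\equiv 0\pmod N$ forces a representative in $I_+$), the holomorphic form $\omega_{i_0,\ldots,i_d}$ has no poles along $x_0\cdots x_d = 0$ that change the cohomology, and the $\mathscr{D}$-module structure is governed by the same hypergeometric operator $P_{{\rm HG},\ua}$. So I would argue $H^d_{\rm dR}(U^*/A_K)^{\chi_{i_0,\ldots,i_d}} \simeq H^d_{\rm dR}(U/A_K)^{\chi_{i_0,\ldots,i_d}} \simeq \mathscr{D}/\mathscr{D}P_{{\rm HG},\ua}$ via the isomorphism from \cite[Corollary 3.6]{A2} recalled in Section \ref{HGsch}, the point being that $\omega_{i_0,\ldots,i_d}$ is the same generator in both and the $\mathscr{D}$-action is computed by the same reduction-of-pole formulas.

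Chaining these identifications, for $n\not\equiv 0\pmod N$ with representative $n$ taken in $\{1,\ldots,N-1\}$ so that $(n,\ldots,n)\in I_+$, I get
$$H^d_{\rm dR}(V^*/A_K)^{\chi_n} \simeq H^d_{\rm dR}(U^*/A_K)^{\chi_{n,\ldots,n}} \simeq H^d_{\rm dR}(U/A_K)^{\chi_{n,\ldots,n}}$$
as $\mathscr{D}$-modules, which is the claim. The main obstacle I anticipate is the second step — controlling how the de Rham cohomology changes under the open immersion $U^* \hookrightarrow U$, since in general removing the divisor $x_0\cdots x_d = 0$ enlarges $H^d_{\rm dR}$ by residue classes; I would need to verify that on the relevant character eigenspace (with all $i_k$ strictly between $0$ and $N$) these extra classes either vanish or lie outside the eigenspace, presumably by a weight/residue argument or by directly exhibiting the comparison isomorphism at the level of the explicit $\mathscr{D}$-module presentations. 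The group-theoretic first step, while bookkeeping-heavy, should be routine once the character of $w$ under $\mu_N^{d+1}$ is pinned down.
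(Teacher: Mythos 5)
Your overall architecture coincides with the paper's: first descend from $U^*$ to $V^*$ along the Galois covering $\rho$ with deck group $\operatorname{Ker}(m)$, $m\colon \mu_N^{d+1}\to\mu_N$ the multiplication, to get $H^d_{\rm dR}(V^*/A_K)^{\chi_n}\simeq H^d_{\rm dR}(U^*/A_K)^{\chi_{n,\ldots,n}}$; then compare $U^*$ with $U$. The first step is fine (modulo the fact that your ``sum over all tuples with $i_0\equiv\cdots\equiv i_d\equiv n$'' consists of a single diagonal character, since the characters of $\mu_N^{d+1}$ trivial on $\operatorname{Ker}(m)$ are exactly the $\chi_{m,\ldots,m}$).

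The genuine gap is in the second step, and you flag it yourself without closing it. Asserting that $\omega_{i_0,\ldots,i_d}$ ``has no poles that change the cohomology'' is not an argument, and the fallback you propose --- comparing explicit $\mathscr{D}$-module presentations --- is circular here, because the presentation $\mathscr{D}/\mathscr{D}P_{{\rm HG},\ua}$ from \cite[Corollary 3.6]{A2} is available for $H^d_{\rm dR}(U/A_K)^{\chi_{i_0,\ldots,i_d}}$, not for $H^d_{\rm dR}(U^*/A_K)^{\chi_{i_0,\ldots,i_d}}$; establishing the latter is precisely what is at stake. The missing idea is the localization exact sequence for $D=U\setminus U^*$,
$$\cdots \to \mathscr{H}^{d-2}_{\rm dR}(D/A_K) \to \mathscr{H}^d_{\rm dR}(U/A_K) \to \mathscr{H}^d_{\rm dR}(U^*/A_K) \to \mathscr{H}^{d-1}_{\rm dR}(D/A_K) \to \cdots,$$
combined with the observation that each irreducible component $\{x_i=0\}$ of $D$ is fixed pointwise by the $i$th factor of $\mu_N^{d+1}$, so that factor acts trivially on the cohomology of $D$ supported there; since $\chi_{n,\ldots,n}$ restricts nontrivially to that factor when $n\not\equiv 0\pmod N$, the projector $e^{\chi_{n,\ldots,n}}$ annihilates $\mathscr{H}^j_{\rm dR}(D/A_K)$ for every $j$, and the sequence collapses to the desired isomorphism $\mathscr{H}^d_{\rm dR}(U/A_K)^{\chi_{n,\ldots,n}}\simeq\mathscr{H}^d_{\rm dR}(U^*/A_K)^{\chi_{n,\ldots,n}}$. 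This is exactly the ``extra classes lie outside the eigenspace'' mechanism you hoped for, but the concrete reason --- triviality of the relevant group action on the boundary components --- is the content of the proof and needs to be stated.
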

\begin{proof}
Since $U^*$ is Galois over $V^*$ with Galois group $\operatorname{Ker}(m)$, where $m \colon \mu_N^{d+1} \to \mu_N$ is the multiplication, 
we have 
\begin{align*}
H^d_{\rm dR}(U^*/A_K)^{\chi_{n, \ldots, n}} \simeq H^d_{\rm dR}(V^*/A_K)^{\chi_n}. 
\end{align*}
Put $D=U \backslash U^*$. 
We have a localization exact sequence 
\begin{align*}
\cdots &\to \mathscr{H}^{d-2}_{\rm dR}(D/A_K) \to \mathscr{H}^d_{\rm dR}(U/A_K) \to \mathscr{H}^d_{\rm dR}(U^*/A_K) \to \mathscr{H}^{d-1}_{\rm dR}(D/A_K) \to \cdots
\end{align*}
(cf. \cite[Theorem (3.3)]{Hartshorne}). 
Since each irreducible component of $D$ is given by $\{x_i=0\}$ and $i$th component of $\mu_N^{d+1}$ acts trivially on $\{x_i=0\}$, we have 
$$\mathscr{H}^j_{\rm dR}(D/A_K)^{\chi_{n, \ldots, n}}=0$$
by the assumption $n \not \equiv 0 \pmod{N}$.  
Therefore, we have an isomorphism 
$$\mathscr{H}^d_{\rm dR}(U/A_K)^{\chi_{n, \ldots, n}} \simeq \mathscr{H}^d_{\rm dR}(U^*/A_K)^{\chi_{n, \ldots, n}}, $$
which induces an isomorphism 
$$H^d_{\rm dR}(U/A_K)^{\chi_{n, \ldots, n}} \simeq H^d_{\rm dR}(U^*/A_K)^{\chi_{n, \ldots, n}}. $$
\end{proof}
Let 
$\omega_n$ be the holomorphic $d$-form on $V^*$ defined by  
$$\omega_n=(1-z_1)^{n-1} \cdots (1-z_n)^{n-1}(z_1 \cdots z_d -t)^{n-1}\frac{dz_1 \wedge \cdots \wedge dz_d}{w^n},  $$
which defines an element of $H^d_{\rm dR}(V^*/A_K)^{\chi_n}$. 
Then we compute that 
\begin{align} \label{rho}
\begin{split}
\rho^*(\omega_n) & =(-1)^d N^d x_0^{n-N }x_1 ^{n-1} \cdots x_d^{n-1}\frac{dx_1 \wedge \cdots \wedge dx_d}{(1-x_1^N) \cdots (1-x_d^N)} \\
&=(-1)^d N^{d+1} \omega_{n, \ldots, n}, 
\end{split}
\end{align}
hence $H^d_{\rm dR}(V^*/A_K)^{\chi_n}$ is a $\mathscr{D}$-module generated by $\omega_n$.  

Let $\widehat{f} \colon \widehat{U} \to \spec A_K$ (resp. $\widehat{g} \colon \widehat{V}^* \to \spec A_K$) be a family of affine schemes over $K$, whose general fiber is given by $\widehat{f}^{-1}(t)=U_{t^{-1}}$ (resp. $\widehat{g}^{-1}(t)=V^*_{t^{-1}}$).

Put $t=t_0^N$. 
From now on, we change the notations and let 
$A=W[t_0, t_0^{-1}, (1-t_0^N)^{-1}]$, $A_K=K \otimes_W A$ and $\widehat{B}_K=K \otimes_W A[h(t_0^N)^{-1}]^{\wedge}$, where $h(t)$ is the polynomial defined in Section \ref{HGsch}.  
Then there is an isomorphism 
\begin{align} \label{isom}
&\iota \colon V^*  \to \wV^* ; \quad (z_1, \ldots, z_d, w) \mapsto (u_1, \ldots, u_d, v) 
\end{align}
over $A_K$, which is given by 
\begin{align*}
u_i=z_i^{-1}  (i=1, \ldots, d), \quad  v=
\left\{
\begin{array}{ll}
\frac{\xi w}{z_1^2 \cdots z_d^2 t_0^{N-1}}  & (\textit{$N$ and $d$ are even}), \\
\frac{w}{z_1^2 \cdots z_d^2 t_0^{N-1}}& (otherwise), 
\end{array}
\right.
\end{align*} 
where $\xi \in K^*$ such that $\xi^N=-1$.  
For $n \in \{1, \ldots, N-1\}$, define  
$$\varphi_{N, d}(n)=
\left\{
\begin{array}{ll}
{n}  & (\textit{$N$ and $d$ are even}), \\ 
0 & (otherwise). 
\end{array}
\right.
$$
Since we have 
\begin{align}  \label{iota2}
\iota^*(\omega_{n})=(-1)^{(d+1)n-1}\xi^{-\varphi_{N, d}(n)}t_0^{N-n} \omega_{n}, 
 \end{align}
the map \eqref{isom} induces an isomorphism 
$$\iota^* \colon H_{\rm dR}^d(\wV^* /A_K)^{\chi_n} \xrightarrow{\simeq} H_{\rm dR}^d(V^*/A_K)^{\chi_n}. $$
We define a morphism 
$$\widetilde{\iota^*} \colon H^d_{\rm dR}(\wU/A_K)^{\chi_{n, \ldots, n}} \to H^d_{\rm dR}(U/A_K)^{\chi_{n, \ldots, n}}$$ 
so that commutes the following diagram 
$$
  \begin{CD}
     H_{\rm dR}^d(\wV^* /A_K)^{\chi_n}  @>{\simeq}>>  H^d_{\rm dR}(\wU/A_K)^{\chi_{n, \ldots, n}} \\
  @V{\iota^*}V{\simeq}V    @VV{\widetilde{\iota^*}}V \\
  H_{\rm dR}^d(V^* /A_K)^{\chi_n}   @>{\simeq}>> H^d_{\rm dR}(U/A_K)^{\chi_{n, \ldots, n}} .  
  \end{CD}
$$
Then by \eqref{rho} and \eqref{iota2}, we compute explicitly 
\begin{align} \label{action}
 \widetilde{\iota^*}(\omega_{n, \ldots, n})=(-1)^{(d+1)n-1}\xi^{-\varphi_{N, d}(n)}t_0^{N-n} \omega_{n, \ldots, n}, 
 \end{align}
 hence by \eqref{VH}, we have 
 \begin{align} \label{iw}
 \widetilde{\iota^*}(VH^d_{\rm dR}(\wU/A_K)^{\chi_{n, \ldots, n}}_{\hB_K}) \simeq VH^d_{\rm dR}(U/A_K)^{\chi_{n, \ldots, n}} _{\hB_K}. 
\end{align}

\begin{proof}[Proof of Theorem \ref{main:1}]
Let $\sigma$ be the $F$-linear $p$th Frobenius on $\widehat{B}_K$ given by $\sigma(t_0)=t_0^p$, and $\Phi$ be the $p$th Frobenius induced by $\sigma$ acting on the de Rham cohomology groups.  
Let $m, n \in \{1, \ldots, N-1\}$ be integers such that $pm \equiv n \pmod{N}$. 
By Theorem \ref{frob} and \cite[Proposition 4.11 (3)]{Wang}, we have 
\begin{align}
\Phi(\omega_{m, \ldots, m}) \equiv p^d\df_{\ua}(t_0^{-N})^{-1} \omega_{n \ldots, n}  \mod{VH^d_{\rm dR}(\wU/A_K)^{\chi_{n, \ldots, n}} _{\hB_K}},   \label{phi}
\end{align}
where $\ua=(1-n/N, \ldots, 1-n/N)$. 
Apply $\widetilde{\iota^*}$ on the both sides of \eqref{phi}. 
By \eqref{action} and \eqref{iw}, we have 
\begin{align*}
&\Phi((-1)^{(d+1)m-1}\xi^{-\varphi_{N, d}(m)}t_0^{N-m}\omega_{m, \ldots, m}) \\
& \equiv p^d\df_{\ua}(t_0^{-N})^{-1} ((-1)^{(d+1)n-1}\xi^{-\varphi_{N, d}(n)}t_0^{N-n} \omega_{n, \ldots, n})  \mod{VH^d_{\rm dR}(U/A_K)^{\chi_{n, \ldots, n}} _{\hB_K}}. 
\end{align*}
By Theorem \ref{frob}, the left-hand side is computed as 
\begin{align*}
& (-1)^{(d+1)m-1}\xi^{-p\varphi_{N, d}(m)}t_0^{p(N-m)} p^d \df_{\ua}(t_0^N)^{-1}\omega_{n, \ldots, n} \mod{VH^d_{\rm dR}(U/A_K)^{\chi_{n, \ldots, n}} _{\hB_K}}, 
\end{align*}
hence we obtain  
\begin{align*}
\df_{\ua}(t)&=(-1)^{(d+1)(m-n)}\xi^{-(p\varphi_{N, d}(m) - \varphi_{N, d}(n))} t^{pa_m-a_n}\df_{\ua}(t^{-1}) \\ 
&=(-1)^{(d+1)(m-n)-\frac{p\varphi_{N, d}(m) - \varphi_{N, d}(n)}{N}} t^{pa_m-a_n}\df_{\ua}(t^{-1}), 
\end{align*}
where we put $a_i=1-i/N$. 

First, we prove the case $p \neq 2$. 
Since we have 
$$pa_m -a_n=p-1-\frac{pm-n}{N}=l, $$ 
it suffices to show that 
\begin{align} 
(d+1)(m-n) - \dfrac{p\varphi_{N, d}(m) - \varphi_{N, d}(n)}{N} \equiv 
 (d+1)l \pmod{2}.  \label{sign}
\end{align}

Suppose that $N$ and $d$ are even. 
Then we compute that 
\begin{align*}
&(d+1)(m-n)-\dfrac{p\varphi_{N, d}(m) - \varphi_{N, d}(n)}{N} \\ 
&\qquad = (d+1)(m-n)-\frac{pm-n}{N}  \\
&\qquad \equiv N \dfrac{pm-n}{N} -\frac{pm-n}{N} \\
&\qquad \equiv -\frac{pm-n}{N} \equiv l \equiv (d+1)l \pmod{2}, 
\end{align*}
hence \eqref{sign} holds.  

Next, suppose that $N$ is odd or $d$ is odd. 
Then the left-hand side of \eqref{sign} is 
\begin{align*}
(d+1)(m-n)-\dfrac{p\varphi_{N, d}(m) - \varphi_{N, d}(n)}{N} = (d+1)(m-n). 
\end{align*}
If $d$ is odd, then \eqref{sign} holds. 
If $N$ is odd and $d$ is even, then we have 
\begin{align*}  
l=p-1-\frac{pm-n}{N} 
 \equiv m-n \pmod{2}, 
 \end{align*}
hence \eqref{sign} holds.  

Secondly, we prove the case $p=2$. Since $N$ is odd, it suffices to show that 
\begin{align} \label{2}
(d+1)(m-n)\equiv   \left\{
\begin{array}{ll}
1+(d+1)l & (\textit{$d$ is even and $(a_n)'\equiv 1 \pmod{2}$}),\\
(d+1)l  & (otherwise)
\end{array}
\right.
\end{align}
modulo $2$. 
If $d$ is odd, then \eqref{2} holds. 
Suppose that $d$ is even.  
Then we have 
$$(d+1)(m-n) \equiv m-n \pmod{2}$$
and 
$$(d+1)l \equiv l \equiv 1-\dfrac{2m-n}{N} \equiv 1-n \pmod{2}. $$
Therefore, 
$$(d+1)(m-n) \equiv 1+(d+1)l \pmod{2}$$
holds if and only if $m \equiv 0 \pmod{2}$, which means that $(a_n)'=a_m=1-m/N \equiv 1 \pmod{2}$. 
Hence, \eqref{2} holds. 
\end{proof}

For a positive integer $f$, put $q=p^f$ and define 
$$\mathscr{F}^{{\rm Dw}, f}_{\ua}(t)= \prod_{i=0}^f \mathscr{F}^{{\rm Dw}}_{\ua^{(i)}}(t^{p^i})=
\dfrac{F_{\ua}(t)}{F_{\ua^{(f)}}(t^q)} \in  \Z_p\langle t, h_{\ua}(t)^{-1}\rangle. $$
Repeatedly using Theorem \ref{main:1}, we have the following corollary. 
\begin{cor} \label{maincor}
Suppose the conditions in Theorem \ref{main:1} (i) and (ii).  
Let $l \in \{1, \ldots, q-1\}$ be the unique integer such that $a + l \equiv 0 \pmod{q}$. 
If $p \neq 2$, then we have  
$$\mathscr{F}^{{\rm Dw}, f}_{a, \ldots, a}(t)=
((-1)^{d+1}t)^l \mathscr{F}^{{\rm Dw}, f}_{a, \ldots, a}(t^{-1}) 
$$
in $\Z_p\langle t, t^{-1}, h_{a, \ldots, a}(t)^{-1}\rangle$. 
If $p=2$, then the above holds up to sign. 
\end{cor}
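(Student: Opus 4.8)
The plan is to apply Theorem \ref{main:1} to each factor $\df_{a^{(i)}, \ldots, a^{(i)}}(t^{p^i})$, $0\le i\le f-1$, in the telescoping product defining $\mathscr{F}^{{\rm Dw}, f}_{a, \ldots, a}(t)$, and then multiply the resulting identities. The preliminary point is that conditions (i) and (ii) of Theorem \ref{main:1} are preserved by the Dwork prime: if $a=k/N$ with $p\nmid N$ and $0<k<N$, and $l_0\in\{0,\ldots,p-1\}$ is the digit with $a+l_0\equiv 0\pmod p$, then $p\mid k+l_0N$, so $a'=(k+l_0N)/(pN)=k_1/N$ with $k_1=(k+l_0N)/p\in\Z$, and $a'=(a+l_0)/p\in(0,1)$ forces $0<k_1<N$. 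Iterating, $a^{(i)}=k_i/N$ with $0<k_i<N$ for all $i$, so the tuple $(a^{(i)},\ldots,a^{(i)})$ satisfies (i) and (ii). Hence Theorem \ref{main:1} applies to it, and, writing $l_i\in\{0,\ldots,p-1\}$ for the digit with $a^{(i)}+l_i\equiv 0\pmod p$, we get
\[
\df_{a^{(i)}, \ldots, a^{(i)}}(t)=\bigl((-1)^{d+1}t\bigr)^{l_i}\,\df_{a^{(i)}, \ldots, a^{(i)}}(t^{-1})
\]
in $\Z_p\langle t, t^{-1}, h_{a^{(i)},\ldots,a^{(i)}}(t)^{-1}\rangle$, up to sign if $p=2$.

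Next I would transport each of these $f$ identities to the single ring $R:=\Z_p\langle t, t^{-1}, h_{a,\ldots,a}(t)^{-1}\rangle$ via the substitution $t\mapsto t^{p^i}$. The point is that $\df_{a^{(i)},\ldots,a^{(i)}}(t^{p^i})$ already defines an element of $R$ (this is exactly what makes the defining product of $\mathscr{F}^{{\rm Dw},f}_{a,\ldots,a}$ lie in $R$), and $t\mapsto t^{p^i}$ intertwines the involution of the source ring with $\iota$, since on generators $t^{-1}\mapsto(t^{p^i})^{-1}$ corresponds to $t^{p^i}\mapsto t^{-p^i}$. Applying this substitution to the identity above gives
\[
\df_{a^{(i)}, \ldots, a^{(i)}}(t^{p^i})=\bigl((-1)^{d+1}t^{p^i}\bigr)^{l_i}\,\df_{a^{(i)}, \ldots, a^{(i)}}(t^{-p^i})
\]
in $R$, up to sign if $p=2$. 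Multiplying over $i=0,\ldots,f-1$, and using that $\iota$ is a ring homomorphism with $\iota\bigl(\df_{a^{(i)},\ldots,a^{(i)}}(t^{p^i})\bigr)=\df_{a^{(i)},\ldots,a^{(i)}}(t^{-p^i})$, we obtain
\[
\mathscr{F}^{{\rm Dw}, f}_{a, \ldots, a}(t)=(-1)^{(d+1)\sum_{i=0}^{f-1}l_i}\,t^{\,\sum_{i=0}^{f-1}l_ip^i}\,\mathscr{F}^{{\rm Dw}, f}_{a, \ldots, a}(t^{-1}),
\]
up to sign if $p=2$.

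It remains to evaluate the two exponents. Induction on $i$, using $a^{(i)}=(a^{(i-1)}+l_{i-1})/p$, gives $a^{(i)}=\bigl(a+\sum_{j=0}^{i-1}l_jp^j\bigr)/p^i$; taking $i=f$ yields $q\,a^{(f)}=a+\sum_{j=0}^{f-1}l_jp^j$. Since $a^{(f)}\in\Z_p$ and each $l_j\in\{0,\ldots,p-1\}$, the integer $l':=\sum_{j=0}^{f-1}l_jp^j$ lies in $\{0,\ldots,q-1\}$ and satisfies $a+l'\equiv 0\pmod q$, so by the normalization in the statement $l'=l$. If $p\ne 2$, each $p^j$ is odd, hence $\sum_{j=0}^{f-1}l_jp^j\equiv\sum_{j=0}^{f-1}l_j\pmod 2$ and therefore $(-1)^{(d+1)\sum l_j}=(-1)^{(d+1)l}$; the identity then reads $\mathscr{F}^{{\rm Dw},f}_{a,\ldots,a}(t)=\bigl((-1)^{d+1}t\bigr)^l\,\mathscr{F}^{{\rm Dw},f}_{a,\ldots,a}(t^{-1})$, as required. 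If $p=2$ the same chain of equalities gives the formula up to sign.

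The only delicate step is the descent in the second paragraph: Theorem \ref{main:1} for $(a^{(i)},\ldots,a^{(i)})$ is a priori an identity in the Tate localization attached to $h_{a^{(i)},\ldots,a^{(i)}}$, and one must verify that after $t\mapsto t^{p^i}$ it becomes an identity in $R$. This follows from the comparison of Tate localizations in \cite[Proposition 4.11]{Wang}, the Frobenius congruence $\overline{h_{a^{(i)},\ldots,a^{(i)}}(t^{p^i})}=\overline{h_{a^{(i)},\ldots,a^{(i)}}(t)}^{\,p^i}$, and the finiteness of $\{[\overline{F_{a^{(j)},\ldots,a^{(j)}}(t)}]_{<p}\}_{j\ge 0}$, which together imply that $\overline{h_{a^{(i)},\ldots,a^{(i)}}}$ divides a power of $\overline{h_{a,\ldots,a}}$, so that inverting the former is already accounted for in $R$. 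All the remaining work is elementary arithmetic of base-$p$ digits and of parities, so this localization bookkeeping is really the only obstacle.
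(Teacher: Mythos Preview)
Your argument is correct and is exactly the approach the paper intends: the paper's entire proof is the single sentence ``Repeatedly using Theorem \ref{main:1}, we have the following corollary,'' and you have spelled out precisely what that repetition amounts to---preservation of hypotheses under the Dwork prime, multiplication of the $f$ transformation identities, and the base-$p$ digit computation $l=\sum_{j=0}^{f-1}l_jp^j$ together with the parity check $\sum_j l_j\equiv l\pmod 2$ for odd $p$. Your localization bookkeeping is also fine (and in fact simplifies: under (i) and (ii) the Dwork-prime map is a bijection on $\{1/N,\ldots,(N-1)/N\}$, so the orbit of $a^{(i)}$ coincides with that of $a$ and the reductions of $h_{a^{(i)},\ldots,a^{(i)}}$ and $h_{a,\ldots,a}$ agree up to powers). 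Note also that the displayed product in the paper should run over $0\le i\le f-1$; you have used the correct range.
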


\appendix
\section{Transformation formula of hypergeometric functions over finite fields} \label{FHG}
Here we introduce a transformation formula of a hypergeometric function over a finite field, which is a finite analogue of Theorem \ref{main:1}. 
Over the finite fields, there are several definitions of hypergeometric functions due to Koblitz \cite{Koblitz}, Greene \cite{Greene}, McCarthy \cite{McCarthy}, Fuselier-Long-Ramakrishna-Swisher-Tu \cite{FLRST}, Katz \cite{Katz}, and Otsubo \cite{Otsubo}.  
In this paper, we use Otsubo's definition. 
For a positive integer $n$, let $\mu_n \subset \C^*$ denote the group of $n$th roots of unity. 
Let $\F_q$ be a finite field of characteristic $p$ with $q=p^f$ elements. 
Put $\widehat{\F_q^{*}}=\operatorname{Hom}(\F_q^*, \C^*)$ and for any $\varphi \in \widehat{\F_q^{*}}$, we set $\varphi(0)=0$ and write $\overline{\varphi}=\varphi^{-1}$.  
Let $\varepsilon \in \widehat{\F_q^{*}}$ be the trivial character, i.e. for any $x \in \F_q$, 
$$\varepsilon(x)=
\left\{
\begin{array}{ll}
1 & (x \neq 0),  \\
0 & (x=0). 
\end{array}
\right. $$
We fix a non-trivial additive character $\psi \in \operatorname{Hom}(\F_q, \C^*)$.  
For a multiplicative character $\vp \in \widehat{\F_q^*}$, the Gauss sum, which is a finite analogue of the gamma function, and its variant are defined by 
$$g(\varphi)=-\sum_{x \in \F_q}\varphi(x) \psi(x), \quad g^{\circ}(\varphi)= \left\{
\begin{array}{ll}
g(\varphi) & (\varphi \neq \varepsilon),  \\
q & (\varphi = \varepsilon) 
\end{array}
\right. 
\quad \in \Q(\mu_{p(q-1)}). $$
Then we define the Pochhammer symbol and its variant by 
$$(\varphi)_{\nu}=\frac{g(\vp \nu)}{g(\vp)}, \quad (\varphi)^{\circ}_{\nu}=\frac{g^{\circ}(\vp \nu)}{g^{\circ}(\vp)}, $$
where $\vp, \nu \in \widehat{\F_q^*}$. 
For any $\varphi \in \widehat{\F_q^*}$, we have 
\begin{equation}  
g(\varphi) g^{\circ}(\overline{\varphi})=\varphi(-1) q.  \label{gs}
\end{equation}

With the notations as above, for $\alpha_i, \beta_j, \in \widehat{\F_q^*}$, we define the hypergeometric function over the finite field $\F_q$ by  
\begin{align*} 
{_{d+1}F_{d}}\left( 
\begin{matrix}
\alpha_0, \ldots, \alpha_{d} \\
\beta_1, \ldots, \beta_d
\end{matrix}
; t
\right)
= \dfrac1{1-q}\sum_{\nu \in \widehat{\F_v}} \dfrac{(\alpha_0)_{\nu} \cdots (\alpha_{d})_{\nu} }{(\varepsilon)^{\circ}_{\nu} (\beta_1)^{\circ}_{\nu} \cdots (\beta_d)^{\circ}_{\nu}  } \nu(t) \in \Q(\mu_{q-1}). 
\end{align*}

The transformation formula of the hypergeometric function over the finite field between $t$ and $t^{-1}$ is as follows. 

\begin{prop} \label{ft}
For $\alpha_i$, $\beta_j \in \widehat{\F_q^*}$ $(1 \leq i, j \leq d)$, suppose that $\alpha_i \neq \beta_i$ for all $i$. 
For any $\al_0 \in \widehat{\F_q^*}$, $\al_0 \neq \e$, and $t \in \F_q^*$, we have
$${_{d+1}F_{d}}\left( 
\begin{matrix}
\al_0, \cdots, \al_d \\
\beta_1, \ldots \beta_d
\end{matrix}
; t
\right)=
\overline{\al_0}(-t)
\prod_{i=1}^d \dfrac{g(\al_0 \overline{\beta_i}) g^{\circ} (\overline{\al_i})}{g^{\circ}(\al_0 \overline{\al_i}) g(\overline{\beta_i})}
{_{d+1}F_{d}}\left( 
\begin{matrix}
\al_0, \al_0 \overline{\beta_1}, \cdots, \al_0  \overline{\beta_d} \\
\al_0 \overline{\al_1}, \al_0 \overline{\al_2}, \ldots \al_0 \overline{\al_d}
\end{matrix}
; t^{-1}
\right). 
$$
In particular, if $\al_0 = \cdots =\al _d =\al \neq \e$ and $\beta_1 = \cdots =\beta_d = \e$, then we have 
$${_{d+1}F_{d}}\left( 
\begin{matrix}
\al, \cdots, \al \\
\e, \ldots \e
\end{matrix}
; t
\right)
= \overline{\al}((-1)^{d+1}t) 
{_{d+1}F_{d}}\left( 
\begin{matrix}
\al, \cdots, \al \\
\e, \ldots \e
\end{matrix}
; t^{-1}
\right), $$
which is a finite analogue of Theorem \ref{main:1}. 
\end{prop}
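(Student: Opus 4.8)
The plan is to prove the general formula by a direct computation with Gauss sums, expanding both hypergeometric functions as sums over $\widehat{\F_q^*}$ and matching the terms after a single re-indexing of the summation. Unwinding the definitions of $(\al_j)_{\nu}$, $(\e)^{\circ}_{\nu}$ and $(\beta_i)^{\circ}_{\nu}$, the left-hand side equals
\[
\frac1{1-q}\sum_{\nu \in \widehat{\F_q^*}} \frac{\prod_{j=0}^{d} g(\al_j\nu)}{\prod_{j=0}^{d} g(\al_j)}\cdot\frac{q\,\prod_{i=1}^{d} g^{\circ}(\beta_i)}{g^{\circ}(\nu)\,\prod_{i=1}^{d} g^{\circ}(\beta_i\nu)}\;\nu(t).
\]
In this sum I would substitute $\nu = \overline{\al_0}\,\overline{\mu} = \overline{\al_0\mu}$; since $\vp \mapsto \overline{\al_0\vp}$ is a bijection of $\widehat{\F_q^*}$ this is legitimate, and it replaces $\nu(t)$ by $\overline{\al_0}(t)\,\mu(t^{-1})$, the character $\al_0\nu$ by $\overline{\mu}$, each $\al_j\nu$ ($1\le j\le d$) by $\overline{(\al_0\overline{\al_j})\mu}$, each $\beta_i\nu$ by $\overline{(\al_0\overline{\beta_i})\mu}$, and $\nu$ itself by $\overline{\al_0\mu}$.

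Next I would apply \eqref{gs} in the equivalent forms $g(\overline{\vp}) = \vp(-1)q/g^{\circ}(\vp)$ and $g^{\circ}(\overline{\vp}) = \vp(-1)q/g(\vp)$ (legitimate since $g$ and $g^{\circ}$ never vanish) to every Gauss-sum factor produced by the substitution. The key observation is that after this rewriting the $\mu$-dependent part of the term becomes exactly
\[
\frac{g(\al_0\mu)\,\prod_{i=1}^{d} g(\al_0\overline{\beta_i}\mu)}{g^{\circ}(\mu)\,\prod_{i=1}^{d} g^{\circ}(\al_0\overline{\al_i}\mu)},
\]
which, after multiplication by an appropriate $\mu$-independent ratio of Gauss sums, is precisely the generic term of the hypergeometric function with numerator parameters $\al_0, \al_0\overline{\beta_1}, \ldots, \al_0\overline{\beta_d}$ and denominator parameters $\al_0\overline{\al_1}, \ldots, \al_0\overline{\al_d}$ evaluated at $t^{-1}$ (note that $g(\al_0\mu)$ and the $g(\al_0\overline{\beta_i}\mu)$ carry no $\circ$, matching the numerator parameters, while $g^{\circ}(\mu)$ and the $g^{\circ}(\al_0\overline{\al_i}\mu)$ do, matching the denominator parameters $\e, \al_0\overline{\al_1}, \ldots$). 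It then remains to check that the $\mu$-independent constant collected along the way, together with the factor $\overline{\al_0}(t)$ from $\nu(t)$, equals $\overline{\al_0}(-t)\prod_{i=1}^{d}\frac{g(\al_0\overline{\beta_i})g^{\circ}(\overline{\al_i})}{g^{\circ}(\al_0\overline{\al_i})g(\overline{\beta_i})}$ times the normalizing constant of the right-hand hypergeometric function; cancelling common factors and applying \eqref{gs} once more to $g^{\circ}(\overline{\al_i})$ and $g(\overline{\beta_i})$, this reduces to an elementary identity, using repeatedly that $\vp(-1) = \vp(-1)^{-1}$ and $\overline{\al_0}(-t) = \overline{\al_0}(-1)\overline{\al_0}(t)$. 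This yields the general transformation formula.

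The "in particular" statement then follows by specialization: taking $\al_0 = \cdots = \al_d = \al \neq \e$ and $\beta_1 = \cdots = \beta_d = \e$ gives $\al_0\overline{\beta_i} = \al$ and $\al_0\overline{\al_i} = \e$, so the right-hand hypergeometric function is again $\,{}_{d+1}F_{d}$ with all numerator parameters $\al$ and all denominator parameters $\e$; moreover each factor $\frac{g(\al_0\overline{\beta_i})g^{\circ}(\overline{\al_i})}{g^{\circ}(\al_0\overline{\al_i})g(\overline{\beta_i})} = \frac{g(\al)g^{\circ}(\overline{\al})}{g^{\circ}(\e)g(\e)}$ equals $\al(-1)$ by \eqref{gs} together with $g(\e) = 1$ and $g^{\circ}(\e) = q$. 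Combining the resulting factor $\overline{\al}(-t)\,\al(-1)^{d}$ and using that $\al(x) = \overline{\al}(x)$ for $x \in \{\pm1\}$ rewrites it as $\overline{\al}((-1)^{d+1}t)$, which is the claimed formula. The main obstacle I anticipate is entirely computational: carrying the $\mu$-independent constant correctly through the substitution and the repeated use of \eqref{gs}, in particular tracking the sign factors $\vp(-1)$ (for $\vp \in \{\al_0, \ldots, \al_d, \beta_1, \ldots, \beta_d, \mu\}$) and the powers of $q$, and verifying that they assemble into exactly the stated product of Gauss sums — a delicate but routine bookkeeping with no conceptual difficulty. The hypotheses $\al_0 \neq \e$ and $\al_i \neq \beta_i$ enter only to ensure that the parameters of the two hypergeometric functions are in the position for which the defining series is the intended one.
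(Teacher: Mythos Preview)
Your proposal is correct. The paper itself does not give an argument for the first identity but simply cites \cite[Propositions 2.10 and 2.11]{Otsubo}, and derives the ``in particular'' statement from \eqref{gs} exactly as you do; your direct Gauss-sum computation (reindex $\nu=\overline{\al_0\mu}$ and apply \eqref{gs} to flip each $g(\overline{\vp})$, $g^{\circ}(\overline{\vp})$) is essentially what underlies those cited propositions, so your proof is a self-contained version of the same approach rather than a genuinely different one. The only caveat is that your closing remark about the role of the hypotheses $\al_0\neq\e$ and $\al_i\neq\beta_i$ is a bit vague: concretely, $\al_0\neq\e$ is what guarantees that the substitution sends the distinguished factor $g^{\circ}(\nu)$ to $g^{\circ}(\overline{\al_0\mu})$ with $\al_0\mu$ ranging over nontrivial characters when $\mu=\e$, so that \eqref{gs} applies cleanly and the new lower parameter $\e$ reappears via $g^{\circ}(\mu)$, while $\al_i\neq\beta_i$ prevents a numerator parameter $\al_0\overline{\beta_i}$ from coinciding with the matching denominator parameter $\al_0\overline{\al_i}$ (which would introduce an extra discrepancy between $g$ and $g^{\circ}$ at one value of $\mu$); but this does not affect the correctness of your outline.
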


\begin{proof}
The first assertion follows from \cite[Propositions 2.10 and 2.11]{Otsubo} and the second assertion follows from \eqref{gs}. 
\end{proof}

\begin{rmk}
For $t \in \F_q \setminus \{0, 1\}$ and a positive integer $N \geq 2$ such that $N \mid q-1$, let $X_t$ be the smooth projective curve over $\F_q$ birational to the affine curve 
$$(1-x_0^N)(1-x_1^N)=t. $$
It is known that the Frobenius trace of $X_t$, i.e. the unit root of the Artin $L$-function of $X_t$ is written in terms of hypergeometric functions over $\F_q$ (see \cite[Theorem 4.2]{AO}). 
On the other hand, the unit root of that is written in terms of $\mathscr{F}^{{\rm Dw}, f}_{\ua}(t)$ (see \cite[Corollary 4.7]{A}).   
Therefore, by using Proposition \ref{ft} and comparing the unit root of the Artin $L$-function of $X_t$ with that of $X_{t^{-1}}$, we can prove the special case of Corollary \ref{maincor}: 
Let $W=W(\F_q)$ be the Witt ring of $\F_q$ and $\mu_{q-1}$ be the group of $(q-1)$th roots of unity in $W$. 
For $a \in \{1, \ldots, N-1\}$ and $\zeta \in \mu_{q-1}$ such that $h_{\frac{a}N, \frac{a}N}(\zeta) \not \equiv 0 \pmod{pW}$, we have 
$$\mathscr{F}^{{\rm Dw}, f}_{\frac{a}N, \frac{a}{N}}(\zeta)=
\zeta^l \mathscr{F}^{{\rm Dw}, f}_{\frac{a}N, \frac{a}{N}}(\zeta^{-1}),  $$ 
where $l \in \{0, \ldots, q-1\}$ is the unique integer such that $a/N + l \equiv 0 \pmod{q}$. 
Here, the special value of Dwork's $p$-adic hypergeometric function at $t=t_0$ is defined by 
$$\mathscr{F}^{{\rm Dw}, f}_{a_0, a_1}(t_0)=\mathscr{F}^{{\rm Dw}, f}_{a_0, a_1}(t)|_{t=t_0} = \lim_{n \to \infty} \left.  \dfrac{[F_{a_0, a_1}(t)]_{<p^n} }{[F_{a^{(f)}_0, a^{(f)}_1}(t^q)]_{<p^n}} \right|_{t=t_0}.  $$
\end{rmk}

\section*{Acknowledgment}
The author would like to sincerely thank Yoshinosuke Hirakawa and Noriyuki Otsubo for their valuable and helpful comments.  
This paper is a part of the outcome of research performed under Waseda University Grant for Special Research Projects (Project number: 2024C-280) and Kakenhi Applicants (Project number: 2024R-054).

\end{document}